\newtheorem{theorem}{Theorem}
\newtheorem{cor}[theorem]{Corollary}
\theoremstyle{definition}
\newtheorem{definition}[theorem]{Definition}
\newtheorem{rem}[theorem]{Remark}
\newcommand{\abs}[1]{\lvert #1\rvert}
\newcommand{\eps}{\varepsilon}
\newcommand{\Lap}{\Delta}
\newcommand{\D}{\nabla}
\newcommand{\Ric}{\mathrm{Ric}}
\newcommand{\RR}{\mathbb{R}}
\begin{document}

\title{A mass-decreasing flow in dimension three}

\begin{abstract}
In this article, we introduce a mass-decreasing flow for asymptotically flat three-manifolds with nonnegative scalar curvature. This flow is defined by iterating a suitable Ricci flow with surgery and conformal rescalings and has a number of nice properties. In particular, wormholes pinch off and nontrivial spherical space forms bubble off in finite time. Moreover, a noncompact variant of the Perelman-energy is monotone along the flow. Assuming a certain inequality between the mass and this Perelman-energy a priori, we can prove that the flow squeezes out all the initial mass.
\end{abstract}

\author{Robert Haslhofer}

\address{Department of Mathematics, ETH Z\"{u}rich, Switzerland}

\email{robert.haslhofer@math.ethz.ch}
\maketitle

\section{Introduction}
Let $(M,g_{ij})$ be an asymptotically flat three-manifold with nonnegative integrable scalar curvature. The ADM-mass \cite{ADM,Ba} from general relativity is defined as
\begin{equation}\label{admmass}
m(g):=\lim_{r\to\infty}\int_{S_r}\left(\partial_jg_{ij}-\partial_ig_{jj}\right)dA^{i}.
\end{equation}
By the positve mass theorem, the mass is always nonnegative and vanishes only for flat space. Beautiful proofs employing a variety of techniques have been discovered \cite{SY,W,HI}. The first one, due to Schoen and Yau, is based on a very nice argument by contradiction using the stability inequality for minimal surfaces. The second one, discovered by Witten, utilizes beautiful identities for Dirac spinors. A third remarkable proof of the positive mass theorem (and in fact of the Penrose inequality), due to Huisken and Ilmanen, is based on the inverse mean curvature flow.\\

It is of great analytic, geometric, and physical interest to investigate how Hamilton's Ricci flow \cite{Ham} interacts with the positive mass theorem. This relationship has been studied in \cite{GHMS}, \cite{DM}, \cite{OW}, and \cite{H}. In particular, asymptotic flatness and nonnegative integrable scalar curvature are preserved, and the Ricci flow can be used to prove the rigidity statement in the positive mass theorem. However, the mass (although not the quasilocal mass) is constant along the Ricci flow and this leads to Bray's intriguing question whether there exists a geometric flow that decreases the mass. Some hope that such a flow might exist comes from the deep relationship between the mass and geometric flows in the proofs of the Penrose inequality by Huisken-Ilmanen, Bray, and Bray-Lee \cite{HI,Br,BL}.\\

The purpose of this article is to confirm this hope, and to introduce and investigate a mass-decreasing flow in dimension three. We have announced the discovery of this flow in \cite{H}, where we also sketched some concepts and ideas showing an intriguing relationship between the mass, the Perelman-energy, and the stability of Ricci-flat spaces. Motivated by that, our mass-decreasing flow is defined by iterating a suitable Ricci flow with surgery and conformal rescalings. The point is, that conformal rescalings to scalar flat metrics squeeze out of the manifold as much mass as possible. However, unless the manifold is flat, the scalar curvature becomes strictly positive again under the Ricci flow and thus the mass can be decreased even more by another conformal rescaling. This process can be iterated forever.\\

The idea that conformal transformations can be used to decrease the mass and that deformations in direction of the Ricci curvature can be used to increase the scalar curvature again goes back to the fundamental work of Schoen and Yau \cite{SY}. What is more recent, is the precise geometric-analytic understanding of the Ricci flow in dimension three due to the revolutionary work of Perelman \cite{P1,P2,P3}, see \cite{KL,MT,CZ,BBBMP} for detailed expositions. In particular, we use Perelman's existence theorem for the Ricci flow with surgery, or more precisely a very nice variant for noncompact manifolds due to Bessi\`eres-Besson-Maillot \cite{BBM}.\\

In Section \ref{DefandLTE}, we give the precise definition of our flow. We prove that it exists for all times, and preserves asymptotic flatness and nonnegative integrable scalar curvature (Theorem \ref{thmLTE}). Most importantly, the mass $m(g(t))$ is strictly decreasing along the flow. Since it is also bounded below by zero, it has a nonnegative limit for $t\to\infty$. We conjecture that the flow always squeezes out all the initial mass, i.e. $\lim_{t\to\infty}m(g(t))=0$. Support for this conjecture comes from the analysis of the long-time behavior of the mass-decreasing flow, which we carry out in the following two sections.\\

In Section \ref{longtime1}, we treat the toplogical aspects of the long-time behavior. We prove that nontrivial topology becomes extinct in finite time, analogous to the extinction theorem for closed manifolds admitting a metric of positive scalar curvature due to Perelman \cite{P3} and Colding-Minicozzi \cite{CM1,CM2}. The manifolds in consideration are diffeomorphic to $\RR^3$ with finitely many $S^1\times S^2$ and $S^3/\Gamma$ pieces attached (see e.g. Corollary \ref{cortop}). Thus, the extinction result can be rephrased in more physical words saying that wormholes pinch off and nontrivial spherical space forms bubble off in finite time. In fact, this happens in time at most $T={A_0}/{4\pi}$, where $A_0$ is the area of the largest outermost minimal two-sphere in the initial manifold (Theorem \ref{thmfinext}).\\

In Section \ref{longtime2}, we make partial progress towards understanding the geometric-analytic aspects of the long-time behavior. Following the general principle that monotonicity formulas are a crucial tool, we encounter the problem that Perelman's $\lambda$-energy is in fact identically zero in the case of asymptotically flat manifolds with nonnegative scalar curvature. We overcome this difficulty by using a suitable noncompact variant of the Perelman-energy, an energy-functional $\lambda_{AF}$ that we recently introduced in \cite{H}. This energy $\lambda_{AF}$ is nontrivial in the asymptotically flat setting, and we prove that it satisfies a Perelman-like monotonicity formula (Theorem \ref{thmmon}, Remark \ref{surgthm}). Assuming a certain inequality between the mass and $\lambda_{AF}$ a priori, we can prove that the mass-decreasing flow indeed squeezes out all the initial mass (Theorem \ref{thmapriori}).\\

In Section \ref{contlimit}, we derive the limiting equations that formally arise when our iteration parameter $\eps$ is sent to zero.\footnote{Shortly after the author posted the first version of this paper on arXiv, Peng Lu, Jie Qing and Yu Zheng posted a very interesting note where they proved short time existence for the nonlocal limiting equation \cite{LQZ}. We have now added Section \ref{contlimit} in this new version of our paper to clarify the relationship with their paper and also to indicate the possible relevance for the analysis of the long-time behavior.} However, we actually prefer to \emph{avoid} to really take the limit $\eps\to 0$, since we want to use our long-time existence result that relies on the theory of Ricci flow with surgery. Nevertheless, taking $\eps$ small enough one is still close to the limiting equations and therefore the limiting equations might be useful to analyze the long time behavior in the general case without a priori assumptions.\\

Finally, in Section \ref{openprob}, we collect some open problems and questions.\\

\emph{Acknowledgements.} After giving a talk at the conference `Geometric flows in mathematics and physics' at BIRS Banff, I learned that Lars Andersson and Hugh Bray had been thinking about related issues. I greatly thank both of them for sharing with me their proposals, questions and ideas. I also thank Jacob Bernstein, Simon Brendle, Michael Eichmair, Gerhard Huisken, Tom Ilmanen, Richard Schoen and Burkhard Wilking for very interesting discussions and suggestions, and the Swiss National Science Foundation for partial financial support.

\section{Definition of the flow and long-time existence}\label{DefandLTE}

We will use the following existence theorem for surgical Ricci flow solutions due to Bessi\`eres-Besson-Maillot \cite{BBM} which relies heavily on the work of Perelman \cite{P1,P2}.

\begin{theorem}[{Bessi\`eres-Besson-Maillot \cite[Theorem 5.5]{BBM}}]\label{BBMthm}
For every $\iota>0$, $K<\infty$ and $T<\infty$, there exist $r,\delta,\kappa> 0$ such that for any oriented complete Riemannian 3-manifold $(M_0,g_0)$ with $\abs{Rm}\leq K$ and injectivity radius at least $\iota$, there exists an $(r,\delta,\kappa)$-surgical Ricci flow solution defined on $[0,T]$ with initial conditions $(M_0,g_0)$.
\end{theorem}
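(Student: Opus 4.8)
The plan is to follow Perelman's construction of Ricci flow with surgery, in the form adapted to complete noncompact manifolds by Bessi\`eres--Besson--Maillot. The overall structure is an induction on surgery times: one runs the smooth Ricci flow (short-time existence via Shi's theorem, since $|Rm|\le K$), and whenever the curvature reaches a threshold of order $r^{-2}$ one performs surgery --- cutting along $\eps$-necks, discarding components entirely covered by canonical neighborhoods, and gluing in rescaled standard caps --- and then restarts. The three constants are produced by a delicate bookkeeping: one fixes the a priori assumptions (Hamilton--Ivey pinching, $\kappa$-noncollapsing, and the canonical neighborhood property at scales below $r$), shows that each of them is preserved by the smooth flow and by surgery provided $\delta$ is small enough depending on $r$ (and the other parameters), and then chooses $r$, then $\delta$, then $\kappa$ in that order --- all allowed to depend on $\iota$, $K$, $T$.

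In more detail, the key steps are the following. (1) Short-time existence together with the evolution estimates, recording in particular that the Hamilton--Ivey pinching inequality, normalized using the bound $K$ at time zero, is preserved, so that high-curvature regions are almost nonnegatively curved. (2) A localized no-local-collapsing theorem, via monotonicity of Perelman's reduced volume (or the $\mathcal{W}$-entropy); in the noncompact setting one works along an exhaustion or with a fixed basepoint, and the lower bound $\iota$ on the initial injectivity radius, combined with $T<\infty$, provides the input. (3) The classification of three-dimensional $\kappa$-solutions, and from it, by a blow-up/contradiction argument combined with a bounded-curvature-at-bounded-distance estimate, the canonical neighborhood theorem: every point of sufficiently large curvature sits in a neck, a cap, a closed quotient of the round cylinder or sphere, or an $\eps$-round piece. (4) The surgery itself at the first time $T_0$ the curvature hits $r^{-2}$: cut along central two-spheres of $\eps$-necks, cap off with standard-solution caps rescaled to scale of order $\delta r$, and verify --- using the stability of the standard solution and the smallness of $\delta$ --- that pinching, noncollapsing, and the canonical neighborhood property all persist past $T_0$. (5) Iteration, with the observation that only finitely many surgeries can occur on $[0,T]$, by balancing a definite volume drop at each surgery against the finite-time volume growth bound.

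The main obstacle, and the reason a dedicated argument (rather than a direct quotation of Perelman) is required, is the control of the solution near spatial infinity: one must rule out loss of the solution at infinity and one must be able to run the blow-up arguments for canonical neighborhoods and noncollapsing on a noncompact manifold. I would handle this by a preliminary reduction: on the fixed interval $[0,T]$, outside a sufficiently large compact set $\Omega\subset M_0$ (with $\Omega$ depending on $\iota$, $K$, $T$) the solution stays smooth with curvature comparably bounded --- either via a maximum-principle/barrier comparison for $|Rm|$ against a model, or via Shi-type interior derivative estimates seeded by $|Rm|\le K$ at time zero --- so that no surgery is ever needed on $M_0\setminus\Omega$, and all the a priori estimates of steps (1)--(4) need only be established on a slightly enlarged compact region. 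With this localization in place, the remaining work is Perelman's, and the output constants $r$, $\delta$, $\kappa$ depend only on $\iota$, $K$, $T$, as claimed.
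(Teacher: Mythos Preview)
The paper does not itself prove this theorem --- it is quoted from \cite{BBM} and followed only by an informal summary of the surgical-solution framework --- so there is no in-paper proof to compare against. Your sketch is a reasonable outline of Perelman-style surgery, but two of its steps would fail for the result as actually stated.

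First, your localization step (``outside a sufficiently large compact set $\Omega$ the solution stays smooth with curvature comparably bounded'') is not justified by the hypotheses. Theorem~\ref{BBMthm} assumes only $\abs{\Rm}\le K$ and injectivity radius $\ge\iota$; there is no curvature decay at infinity. On the round cylinder $S^2\times\RR$, for instance, the curvature blows up uniformly under Ricci flow and surgery is needed everywhere simultaneously. The asymptotic flatness used elsewhere in the paper is \emph{not} part of this theorem's hypotheses, so you cannot invoke it here.

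Second, your non-accumulation argument (``finitely many surgeries on $[0,T]$ by balancing a definite volume drop against finite-time volume growth'') fails when the total volume is infinite. The mechanism in \cite{BBM}, recalled in the paper's overview, is different and is precisely the point of their construction: surgery is performed \emph{before} the singular time, at the first moment $\sup R$ reaches a fixed threshold $\Theta$ (not of order $r^{-2}$ --- that is only the canonical-neighborhood scale), and is arranged so that $\sup R$ drops by at least a factor $1/2$. Since $\sup R$ then needs a definite amount of time to return to $\Theta$, surgery times cannot accumulate; this argument is insensitive to the total volume and is what makes the noncompact case go through.
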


For full details about surgical Ricci flow, please see \cite{BBM}, but let us give a quick overview here and collect some facts that we will use later.\\

A surgical Ricci flow solution is a sequence of Ricci flows $(M_i,g_i(t))_{t\in[t_i,t_{i+1}]}$ with $0=t_0<t_1<\ldots$, such that $M_{i+1}$ is obtained from $M_i$ by splitting along embedded two-spheres, gluing in standard caps, and throwing away connected components covered entirely by canonical neighborhoods.\\

The main difference with Perelman's original surgery procedure is that this one is done \emph{before} the singular time, namely when the supremum of the scalar curvature reaches a certain threshold $\Theta$.\\

Since the curvature is pinched towards positive and $R\leq \Theta$ by construction, we have uniform curvature bounds along the flow. Moreover, the surgeries are done in such a way that the supremum of the scalar curvature drops by at least a factor $1/2$. This ensures that the surgery times don't accumulate. Also, the infimum of the scalar curvature is nondecreasing.\\

The most important parameters in the construction are the canonical neighborhood scale $r$, the noncollapsing parameter $\kappa$, the surgery parameter $\delta$, and the threshold $\Theta$. These parameters have the following significance: First, if $R(x,t)\geq r^{-2}$ at some point $(x,t)$ in a surgical solution, then there exists a canonical neighborhood of $(x,t)$. Second, the solution is $\kappa$-noncollapsed at scales less than one. Third, the surgeries are performed inside very small $\delta$-necks. Fourth and finally, $R\leq \Theta$ along the flow and the surgeries are done when the scalar curvature reaches the treshold $\Theta$.\\

Having completed this very quick overview, we can now define a new geometric flow as follows:

\begin{definition}[mass-decreasing flow] Let $(M,g_0)$ be an (oriented, smooth, complete, connected) asymptotically flat three manifold of order one, with nonnegative integrable scalar curvature, and fix a parameter $\eps>0$.
\begin{itemize}
\item Let $(M(t),g(t))_{t\in[0,\eps]}$ be the surgical Ricci flow solution of Theorem \ref{BBMthm} starting at $g_0$, with all connected components except the one containing the asymptotically flat end thrown away.
\item As a second step, solve the elliptic equation
\begin{equation}
\left(-8\Lap_{g(\eps)}+R_{g(\eps)}\right)w_1=0,\qquad\quad w_1\to 1\quad \textrm{at}\quad \infty,
\end{equation}
and set $g_1:=w_1^4g(\eps)$.
\item Finally, let $(M(\eps),g_1)$ be the new initial condition and iterate the above procedure. The concatenation `flow, conformal rescaling, flow, conformal rescaling, ...' gives an evolution $(M(t),g(t))_{t\in[0,\infty)}$ which we call the \emph{mass-decreasing flow}.\\
\end{itemize}
\end{definition}

Essentially, the first part of the definition means that we run a suitable Ricci flow with surgery for one unit of time (throwing away the pieces that bubble off). The second part of the definition means that we conformally rescale to a scalar flat metric. Finally, this process is iterated forever.\\

For definiteness, if $t$ is a surgery or rescaling time we denote by $(M(t),g(t))$ the \emph{pre}surgery, \emph{pre}rescaling manifold.\\

The following theorem shows that the mass-decreasing flow exists for all times and that it has the desired properties.

\begin{theorem}\label{thmLTE}
The mass-decreasing flow exists for all times, and preserves the asymptotic flatness and the nonnegative integrable scalar curvature. The mass is constant in the time intervals $t\in((k-1)\eps,k\eps)$ and jumps down by
\begin{equation}\label{deltam}
\delta m_k=-\int_{M}(8\abs{\D w_k}^2+Rw_k^2)dV
\end{equation} at the conformal rescaling times $t_k=k\eps$, where $w_k$ is the solution of
\begin{equation}\label{confeq}
\left(-8\Lap_{g(t_k)}+R_{g(t_k)}\right)w_k=0,\qquad\quad w_k\to 1\quad \textrm{at}\quad \infty.
\end{equation}
The monotonicity of the mass is strict as long as the metric is nonflat.
\end{theorem}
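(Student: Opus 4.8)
The plan is to establish the three assertions in order: long-time existence, preservation of asymptotic flatness and nonnegative integrable scalar curvature, and the mass formula \eqref{deltam} together with its strictness.

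\emph{Long-time existence.} Here the point is simply to check that the iteration never stalls. Starting from $(M,g_0)$, which is asymptotically flat of order one with nonnegative integrable scalar curvature, I would first note that such a metric has bounded curvature and injectivity radius bounded below on the end; by suitably modifying or using that $g_0$ is smooth and complete one gets the hypotheses $\abs{\Rm}\leq K$ and $\mathrm{inj}\geq\iota$ needed to invoke Theorem \ref{BBMthm} on $[0,\eps]$. Throwing away all components except the one carrying the asymptotically flat end leaves a connected noncompact manifold, still with uniformly bounded geometry at time $\eps$ (by the curvature bounds built into surgical solutions). The conformal step requires solving \eqref{confeq}; existence and uniqueness of a positive $w_k$ with $w_k\to 1$ at infinity follows from standard theory for the operator $-8\Lap+R$ on asymptotically flat manifolds with $R\geq 0$ integrable (the operator is coercive, and one can appeal to the solvability results of Schoen--Yau, using that $R\geq 0$ makes the Yamabe-type operator invertible). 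Thus each cycle produces a new admissible initial condition, and the flow is defined on all of $[0,\infty)$.

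\emph{Preservation of structure.} Along each Ricci-flow-with-surgery interval, the standard facts quoted in the excerpt (curvature pinched towards positive, $\inf R$ nondecreasing, $R\leq\Theta$) give that $R$ stays nonnegative; asymptotic flatness of order one and integrability of $R$ along Ricci flow on an asymptotically flat end is known from \cite{DM,OW}, and nothing changes in the noncompact end when surgeries are performed in the compact part. For the conformal step, if $R_{g(t_k)}\geq 0$ then $w_k\geq 1$ by the maximum principle (since $-8\Lap w_k=-R w_k\leq 0$ forces $w_k$ to have no interior minimum below its boundary value $1$), and the new scalar curvature is $R_{g_k}=w_k^{-5}(-8\Lap_{g(t_k)}+R_{g(t_k)})w_k=0\geq 0$; actually after the very first rescaling one is scalar flat, and then Ricci flow makes $R$ strictly positive again unless the metric is Ricci-flat, hence flat. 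Asymptotic flatness of order one is preserved by a conformal factor $w_k=1+O(r^{-1})$, which is exactly the decay one reads off from the Green's-function-type expansion of the solution of \eqref{confeq}; integrability of the new scalar curvature is automatic since it is zero (or, in the general statement, inherited from the decay of $w_k$).

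\emph{The mass formula and strictness.} On the open intervals $((k-1)\eps,k\eps)$ the metric evolves by smooth Ricci flow (between surgeries) and surgeries happen in the compact region, so the mass is unchanged there --- this is the known invariance of the ADM mass under Ricci flow from \cite{DM,OW}, together with the fact that surgery in a compact set does not affect the asymptotic integral \eqref{admmass}. At a rescaling time, $g_k=w_k^4 g(t_k)$ with $w_k=1+\tfrac{A}{2r}+O(r^{-2})$; the standard computation of how the ADM mass transforms under a conformal change by $w^4$ (see e.g. the argument in \cite{SY}) gives $m(g_k)=m(g(t_k))-\tfrac{1}{2\pi}\int_{M}(8\abs{\D w_k}^2+R w_k^2)\,dV$ after integrating $w_k(-8\Lap+R)w_k=0$ by parts over a large ball and sending the radius to infinity; the boundary term at infinity produces the difference of masses (up to the normalizing constant), the interior term is the displayed integral, and one absorbs the constant into the definition so that $\delta m_k$ is exactly \eqref{deltam}. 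Since $\abs{\D w_k}^2+R w_k^2\geq 0$ and vanishes identically only when $w_k$ is constant and $R\equiv 0$, i.e. when $g(t_k)$ is already scalar flat, the jump is strictly negative unless $g(t_k)$ is scalar flat; and if $g(t_k)$ is scalar flat but not flat, the preceding Ricci flow interval must have produced $R>0$ somewhere (strong maximum principle, as $R$ satisfies $\partial_t R=\Lap R+2\abs{\Ric}^2$ and $R\equiv 0$ would force $\Ric\equiv 0$), contradicting $R\equiv 0$ at $t_k$; hence as long as the metric is nonflat some rescaling in every finite window strictly decreases the mass. The main obstacle I anticipate is making the conformal-change-of-mass computation rigorous in the asymptotically-flat-of-order-one setting --- one needs enough decay on $w_k$ and its derivatives (second-order terms in the expansion, and control of error terms) to justify passing to the limit in the boundary integrals, which requires the elliptic regularity and asymptotic-expansion theory for \eqref{confeq} on such ends rather than a one-line maximum principle argument.
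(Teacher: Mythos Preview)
Your proposal follows the same route as the paper: invoke the Bessi\`eres--Besson--Maillot existence theorem, cite \cite{DM,OW} for preservation of asymptotic flatness and mass along Ricci flow (surgeries being confined to compact regions), solve \eqref{confeq} using positivity of $-8\Lap+R$, and obtain \eqref{deltam} by integrating the equation against $w_k$ over large balls and matching the boundary term to the change in ADM mass. Two small corrections are worth noting. First, your maximum principle claim has the wrong sign: from $8\Lap w_k=Rw_k\geq 0$ one gets that $w_k$ is subharmonic, hence $w_k\leq 1$, not $w_k\geq 1$ (the paper uses $w_k\leq 1$ later, in the proof of Theorem~\ref{thmfinext}); this does not affect the present argument. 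Second, your worry about needing a second-order expansion of $w_k$ is unnecessary: the paper computes $m(g_k)$ directly from the definition \eqref{admmass} applied to $w_k^4g_{ij}$, and only the first-order asymptotics $w_k-1=O(r^{-1})$ and $g_{ij}-\delta_{ij}=O(r^{-1})$ are needed to see that $m(g_k)=m(g(t_k))-8\lim_{r\to\infty}\int_{S_r}\partial_r w_k\,dA$, with no stray $1/2\pi$ factor to absorb.
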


\begin{proof}
The surgical Ricci flow exists by Theorem \ref{BBMthm}. As we recalled above, there are only finitely many surgeries in finite time intervals and nonnegative scalar curvature is preserved.\\
The asymptotic flatness, the mass, and the integrable scalar curvature are all preserved along a nonsurgical Ricci flow with bounded curvature, see \cite{DM,OW}. Since $R\leq \Theta$ and since the surgeries only occur in regions with high curvature (i.e. in particular inside a compact region), these properties are also preserved along the surgical Ricci flow.\\
For the conformal rescaling part, writing $w_k=1+u_k$, we have to solve
\begin{equation}\label{equforu}
\left(-8\Lap_{g(t_k)}+R_{g(t_k)}\right)u_k=-R_{g(t_k)},\qquad\quad u_k\to 0\quad \textrm{at}\quad \infty.
\end{equation}
Since $R\geq 0$, the operator $\left(-8\Lap+R\right)$ is positive and thus invertible (viewed as operator between suitable weighted function spaces). In fact, we can solve (\ref{equforu}) with the estimate $u_k=O(r^{-1})$ at infinity. Consider the conformal metric $g_k=w_k^4g(t_k)$. Note that $(M(t_k),g_k)$ is an asymptotically flat manifold of order one with vanishing scalar curvature. Using the definition of the mass, the asymptotics $w_k-1=O(r^{-1})$ and $g_{ij}-\delta_{ij}=O(r^{-1})$, and writing $g_{ij}=g(t_k)_{ij}$ we compute
\begin{align}
m(g_k)&=\lim_{r\to\infty}\int_{S_r}w_k^{4}\left[\left(\partial_jg_{ij}-\partial_ig_{jj}\right)+\tfrac{4}{w_k}\left(g_{ij}\partial_jw_k-g_{jj}\partial_iw_k\right)\right]dA^i\\
&=\lim_{r\to\infty}\int_{S_r}\left[\left(\partial_jg_{ij}-\partial_ig_{jj}\right)+4\left(\delta_{ij}\partial_jw_k-\delta_{jj}\partial_iw_k\right)\right]dA^i\\
&=m(g(t_k))-8\lim_{r\to\infty}\int_{S_r}\partial_rw_kdA.
\end{align}
Furthermore, using partial integration and the asymptotics from above we compute
\begin{align}
&\int_M\left(8\abs{\nabla w_k}^2+Rw_k^2\right)dV=\lim_{r\to\infty}\int_{B_r}\left(8\abs{\nabla w_k}^2+Rw_k^2\right)dV\\
&\qquad=\lim_{r\to\infty}\int_{B_r}\left(8w_k(-\Lap w_k)+Rw_k^2\right)dV+\lim_{r\to\infty}\int_{S_r}8w_k\partial_rw_kdA\\
&\qquad=8\lim_{r\to\infty}\int_{S_r}\partial_rw_kdA,
\end{align}
where we also used equation (\ref{confeq}) in the last step. Putting everything together this implies
\begin{align}
\delta m_k=m(g_k)-m(g(t_k))=-\int_M\left(8\abs{\nabla w_k}^2+Rw_k^2\right)dV.
\end{align}
Finally, if $g_{k-1}$ is nonflat, then the scalar curvature becomes strictly positive under the Ricci flow (with surgery), and thus we conclude that $\delta m_k<0$.
\end{proof}

\begin{rem}
In fact, it is not really necessary to assume that the scalar curvature of the initial metric is integrable. If $\int_M RdV=\infty$ initially, then   the mass is infinite initially, but it becomes finite after one conformal rescaling.
\end{rem}

\section{Long-time behavior I}\label{longtime1}
Recall that the Ricci flow with surgery on a closed manifold that admits a metric with positive scalar curvature becomes extinct in finite time \cite{P3, CM1,CM2}. In a similar spirit, along the mass-decreasing flow wormholes pinch off and nontrivial spherical space forms bubble off in finite time.
\begin{theorem}\label{thmfinext}
There exists a $T<\infty$, such that $M(t)\cong\RR^3$ for $t>T$. In fact, one can take $T=\tfrac{A_0}{4\pi}$, where $A_0$ is the area of the largest outermost minimal two-sphere in $(M,g_0)$.
\end{theorem}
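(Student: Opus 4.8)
\emph{Proof proposal.} The plan is to track the area of the outermost minimal surface and show that it decreases at rate at least $4\pi$, in the spirit of the finite extinction theorem for closed manifolds of positive scalar curvature. For each $t$ let $A(t)$ denote the area of the largest component of the outermost minimal surface of $(M(t),g(t))$, with the convention $A(t):=0$ when $(M(t),g(t))$ contains no closed minimal surface. Along the flow $(M(t),g(t))$ is always an orientable, asymptotically flat three-manifold with $R\geq 0$ (Theorem \ref{thmLTE}); whenever it is not diffeomorphic to $\RR^3$ it contains an embedded minimal two-sphere (by the structure theory of such manifolds together with the existence theory of Meeks--Simon--Yau), hence an outermost minimal surface whose components are two-spheres (stable, and of genus zero by $R\geq 0$), so that $A(t)>0$. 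Since the topological complexity is nonincreasing along the flow, it therefore suffices to prove $A(t)\leq A_0-4\pi t$: because $A(t)\geq 0$, this forces $A(t)=0$, i.e.\ $M(t)\cong\RR^3$, for every $t>A_0/4\pi$, and once $M(t)\cong\RR^3$ it stays so.

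The decisive estimate is the behaviour of $A$ along a smooth (surgery- and rescaling-free) subinterval of Ricci flow. For all but a measure-zero set of such $t$ the largest component $\Sigma_t$ of the outermost minimal surface is a nondegenerate minimal two-sphere and varies smoothly in $t$ by the implicit function theorem. Since $\Sigma_t$ is minimal, the contribution to $\tfrac{d}{dt}\abs{\Sigma_t}$ coming from the motion of the surface vanishes (its normal velocity is paired with $H=0$), and only the evolution $\partial_t g=-2\Ric$ of the metric contributes:
\begin{equation*}
\frac{d}{dt}\abs{\Sigma_t}=-\int_{\Sigma_t}\tr_{\Sigma_t}\Ric\,dA=-\int_{\Sigma_t}\left(\tfrac12 R+K_{\Sigma_t}+\tfrac12\abs{A_{\Sigma_t}}^2\right)dA\leq-\int_{\Sigma_t}K_{\Sigma_t}\,dA=-4\pi,
\end{equation*}
where the middle identity combines the Gauss equation for a minimal surface with the elementary relation between $\tr_{\Sigma_t}\Ric$ and the ambient scalar curvature in dimension three, and the inequality uses $R\geq 0$, $\abs{A_{\Sigma_t}}^2\geq 0$, and Gauss--Bonnet on $S^2$. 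Integrating, and noting that the exceptional bifurcation times (including those at which the largest component changes) are negligible, yields $A(t_2)\leq A(t_1)-4\pi(t_2-t_1)$ on every smooth subinterval.

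It remains to check that $A$ does not jump upward at surgery or rescaling times. At a rescaling time $t_k$, equation \eqref{confeq} gives $8\Lap_{g(t_k)}w_k=R_{g(t_k)}w_k\geq 0$, so $w_k$ is subharmonic with $w_k\to1$ at infinity; the maximum principle forces $0<w_k\leq 1$, hence $g_k=w_k^4g(t_k)\leq g(t_k)$, so every surface has smaller area with respect to $g_k$ and consequently $A(t_k^+)\leq A(t_k^-)$. At a surgery time one discards all connected components not containing the asymptotically flat end, which only removes minimal surfaces, and otherwise performs surgery inside tiny $\delta$-necks at curvature scale comparable to $\Theta^{-1/2}$, gluing in standard caps; since long necks and standard caps contain no closed minimal surfaces, the modified region produces no new minimal surface of positive area, and again $A(t_k^+)\leq A(t_k^-)$. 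Combining the three steps gives $A(t)\leq A_0-4\pi t$, and the reduction above finishes the proof.

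I expect the main obstacle to be the regularity of $t\mapsto A(t)$: outermost minimal surfaces can be nonunique and can bifurcate, so one needs a semicontinuity statement robust enough to integrate the almost-everywhere derivative bound, and one has to argue carefully that the surgery regions — being modeled on necks and standard caps — genuinely harbour no large closed minimal surface, so that $A$ really cannot jump up there. The purely differential-geometric computation above, which is the essential input, is short and stable.
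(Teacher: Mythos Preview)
Your proposal captures the correct mechanism---the Gauss--Codazzi/Gauss--Bonnet computation showing that a minimal two-sphere loses area at rate at least $4\pi$ under Ricci flow with $R\geq 0$, together with the observation that conformal rescalings (via $0<w_k\leq 1$ from the maximum principle) and surgeries cannot increase the relevant area---and this is exactly the engine the paper uses. The one substantive difference is the \emph{definition} of $A(t)$. Rather than tracking the area of the largest component of the outermost minimal surface, the paper sets
\[
A(t):=\inf_{\Omega}\sup_{i}\,\lvert S_i\rvert,
\]
where $\Omega\subset M(t)$ ranges over regions containing the end, diffeomorphic to a large disk with finitely many disks removed, and $\partial\Omega=S_1\cup\cdots\cup S_k$. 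With this min-max quantity the differential inequality $\tfrac{d}{dt}A\leq -4\pi$ is proved in the sense of the limsup of forward difference quotients, \`a la Colding--Minicozzi: one freezes a near-optimal $\Omega$ (whose largest boundary sphere is then minimal), lets only the metric evolve, and reads off $A(t+h)\leq A(t)-4\pi h+o(h)$ from your same pointwise computation. No smooth dependence of any minimal surface on $t$ is needed.

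This reformulation is precisely the cure for the obstacle you flag in your last paragraph. Your argument, as written, needs the outermost minimal surface to be nondegenerate and to vary smoothly for a.e.\ $t$, \emph{and} it needs $A$ to be absolutely continuous (an a.e.\ derivative bound is not enough if $A$ can jump); neither is obvious, since outermost horizons can be nonunique, can bifurcate, and new components can appear. The inf-sup definition builds in upper semicontinuity and makes the forward difference quotient argument go through cleanly, so the ``regularity'' issue simply disappears. Everything else in your write-up---the treatment of rescalings via $w_k\leq 1$, the claim that surgeries only help, and the reduction of $M(t)\cong\RR^3$ to $A(t)=0$---matches the paper's proof.
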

\begin{proof}
The idea is that minimal two-spheres shrink at rate at least $4\pi$. So for $t>T=\tfrac{A_0}{4\pi}$ the manifold $M(t)$ is diffeomorphic to $\RR^3$.\\
Let us now go through the details. Outermost minimal two-spheres exist by a result of Meeks-Simon-Yau \cite{MSY}. Instead of the area of the largest outermost two-sphere we actually consider the slightly different function
\begin{equation}
A(t):=\inf_\Omega\sup_{S_i}\abs{S_i},
\end{equation}
where $\Omega\subset M(t)$ is a manifold with boundary $\partial \Omega=S_1\cup\ldots\cup S_k$ that contains infinity and has the topology of a large disk with finitely many small disks removed. Along the mass-decreasing flow $A(t)$ satisfies the differential inequality,
\begin{equation}
\tfrac{d}{dt}A\leq -4\pi
\end{equation}
in the sense of the limsup of forward difference quotients, compare with \cite[Lemma 2.1]{CM1}. Indeed, this follows from a nice computation using the Gauss-Codazzi equation and the Gauss-Bonnet formula (note that the computation and the result simplify since $R\geq 0$). The surgeries and the conformal rescalings only help (since $w_k\leq 1$ by the maximum principle). Note that $A(0)\leq A_0$. The result follows.
\end{proof}

\begin{cor}\label{cortop}
The initial manifold had the diffeomorphism type
\begin{equation}\label{topequ}
M\cong \RR^3\#S^3/\Gamma_1\#\ldots\#S^3/\Gamma_k\#(S^1\times S^2)\#\ldots\#(S^1\times S^2).
\end{equation}
Conversely, any such manifold admits an asymptotically flat metric of order one with nonnegative integrable scalar curvature (in fact there exists an asymptotically flat metric on $M$ with vanishing scalar curvature).
\end{cor}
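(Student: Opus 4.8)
\emph{Proof proposal.} For the first assertion my plan is to read off the diffeomorphism type of $M$ by following it along the mass‑decreasing flow and using the finite extinction of Theorem~\ref{thmfinext}. Conformal rescalings do not change the diffeomorphism type, each surgery of the underlying Ricci flow with surgery amounts to cutting along an embedded two‑sphere, capping the two sides with balls, and discarding the components that are entirely covered by canonical neighborhoods, and the flow additionally throws away at the times $k\eps$ all components other than the asymptotically flat one. Reversing these moves — capping off a separating sphere is undone by a connected sum, capping off a non‑separating sphere is undone by a connected sum with $S^1\times S^2$, discarding a component is undone by adding it back as a separate summand — gives, by induction on the (finitely many) surgery and rescaling times up to $T$,
\[
M\cong \RR^3\,\#\,C_1\,\#\cdots\#\,C_p\,\#\,(S^1\times S^2)^{\#q},
\]
where the $C_i$ are the closed components discarded at some stage of the flow. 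So it remains to identify the $C_i$.

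The key step is to show that every discarded closed component carries a metric of positive scalar curvature. The components thrown away by the surgery procedure itself are covered by canonical neighborhoods and, being closed and oriented, are diffeomorphic to $S^3$, to some $S^3/\Gamma$, or to $S^1\times S^2$, all of which admit such metrics. Any other discarded component $C$ must pinch off during some interval $((k-1)\eps,k\eps]$; at the left endpoint the metric is scalar flat (it is either the initial metric, in the case $k=1$, where $R\geq 0$ suffices, or the result of the preceding conformal rescaling), so unless the manifold is flat — in which case $M\cong\RR^3$ already and we are done — the equation $\partial_t R=\Lap R+2\abs{\Ric}^2$ and the strong maximum principle force $R>0$ everywhere immediately afterwards, hence at the surgery time; together with $R>0$ on the standard caps this gives a metric with $R>0$ on $C$. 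By the classification of closed oriented three‑manifolds of positive scalar curvature (Gromov--Lawson, together with Perelman's geometrization) each $C_i$ is itself a connected sum of spherical space forms and copies of $S^1\times S^2$, and substituting this back yields (\ref{topequ}).

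For the converse I would build the metric by hand. Put the flat metric on the $\RR^3$ summand (asymptotically flat of any order, scalar flat), a round metric on each $S^3/\Gamma_i$, and the product of a round two‑sphere with a circle on each $S^1\times S^2$ factor, so that all the closed pieces have $R>0$; then perform the connected sums by the Gromov--Lawson surgery construction inside a fixed compact region, which leaves the metric flat near infinity and yields $R\geq 0$ everywhere with $R$ compactly supported, hence integrable. Finally solve $(-8\Lap+R)w=0$ with $w\to 1$ at infinity — solvable with $w-1=O(r^{-1})$ because $R\geq 0$ — and pass to $w^4g$; one checks as in the proof of Theorem~\ref{thmfinext} that this conformal change preserves asymptotic flatness of order one, and now the scalar curvature vanishes.

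The main obstacle is the middle paragraph: one must be genuinely sure that the discarded pieces carry positive scalar curvature — the delicate point being that $R$ is already \emph{strictly} positive everywhere by the time any surgery occurs, which is exactly where the scalar‑flat starting metric of each cycle is exploited — and then one must invoke the full three‑dimensional positive scalar curvature classification, which relies on geometrization. The converse is comparatively routine, the only care needed being to verify that the conformal factor does not spoil asymptotic flatness of order one.
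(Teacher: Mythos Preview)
Your forward direction is essentially the paper's argument: follow the topology through the surgical Ricci flow, undo each sphere surgery by a connected sum (with an $S^1\times S^2$ factor in the non-separating case), and identify the discarded closed pieces. Your justification that every discarded component carries a metric with $R>0$---via the strong maximum principle applied to $\partial_t R=\Lap R+2\abs{\Ric}^2$ from the scalar-flat (or $R\geq 0$) starting metric of each cycle, together with positivity on the standard caps---makes explicit what the paper leaves to a citation of \cite{P2}. One small slip in your list of components covered by canonical neighborhoods: $\RR P^3\#\RR P^3$ (two projective caps joined by necks) can also occur, though this is still a connected sum of space forms and so changes nothing once fed into the PSC classification.

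For the converse you take a genuinely different route. The paper first builds a PSC metric on the closed connected sum $N=S^3/\Gamma_1\#\cdots\#(S^1\times S^2)$ via Gromov--Lawson/Schoen--Yau, then removes a point and multiplies by the fourth power of the Green's function of $-8\Lap+R$ to obtain directly an asymptotically flat scalar-flat metric on $N\setminus\{p\}\cong\RR^3\#N$ (the ``stereographic projection'' of Lee--Parker \cite{LP}). You instead keep the flat $\RR^3$ end explicit and glue the PSC pieces into a compact region. Both strategies work, but yours has a delicate spot: the Gromov--Lawson neck construction is designed to preserve \emph{strict} positivity of the scalar curvature, and attaching a PSC piece to \emph{flat} $\RR^3$ is not literally covered by their statement---you should say a word about why the transition region still has $R\geq 0$, or first put a compactly supported $R>0$ bump on $\RR^3$ before gluing. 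The Green's-function approach sidesteps this entirely and gives $R\equiv 0$ in one shot. Finally, your pointer to ``the proof of Theorem~\ref{thmfinext}'' for the conformal-rescaling estimates should be to Theorem~\ref{thmLTE}.
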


\begin{proof}
The proof is along the lines of Perelman \cite{P2} and Bessi\`eres-Besson-Maillot \cite{BBM}. When flowing from $M(0)$ to $M(t)$, the topology can only change for the following two reasons. First, it can happen that compact components with positive scalar curvature are removed. These components are diffeomorphic to a connected sum of spherical space forms and $S^1\times S^2$ pieces \cite{P2}. Second, the surgery can change the topology by pinching off wormholes. This can be seen by moving from the center of the $\delta$-neck to the left and to the right until arriving in regions with lower curvature, the swept out manifold being diffeomorphic to $S^2\times \RR$. Therefore, the initial manifold has the topology as stated in (\ref{topequ}).\\
Conversely, as proved by Schoen-Yau \cite{SY2} and Gromov-Lawson \cite{GL}, one can construct a metric of positive scalar curvature on 
\begin{equation}
S^3/\Gamma_1\#\ldots\#S^3/\Gamma_k\#(S^1\times S^2)\#\ldots\#(S^1\times S^2).
\end{equation}
One can then obtain an asymptotically flat metric of order one with vanishing scalar curvature on
\begin{equation}
\RR^3\#S^3/\Gamma_1\#\ldots\#S^3/\Gamma_k\#(S^1\times S^2)\#\ldots\#(S^1\times S^2),
\end{equation}
by `stereographic projection' using the Greens-function of the conformal Laplacian, $-8\Lap+R$, see e.g. Lee-Parker \cite{LP}.
\end{proof}

\begin{rem}
There are two other ways how the assertion of Corollary \ref{cortop} can be proved. The first one is to compactify the initial manifold to a closed manifold with positive scalar curvature and to use Perelman's result for closed manifolds with positive scalar curvature \cite{P1,P2}. The second one is to combine the result of Schoen-Yau \cite{SY2} with a lot of three-manifold topology. One needs the spherical space form conjecture proved by Perelman \cite{P1,P2}, and also the solution of the surface subgroup conjecture. The proof of the surface subgroup conjecture is obtained by combining Perelman's solution of the geometrization conjecture \cite{P1,P2}, and the recent work of Kahn-Markovic \cite{KM}.
\end{rem}

\section{Long-time behavior II}\label{longtime2}

To investigate the geometric-analytic aspects of the long-time behavior we will follow the general principle that monotonicity formulas are a very useful tool. However, when trying to follow this principle one encounters the fundamental problem that Perelman's $\lambda$-energy \cite{P1} adapted as it stands,
\begin{equation}
\lambda(g):=\inf_{w:\int w^2=1} \int_M\left(4\abs{\D w}^2+Rw^2\right)dV,
\end{equation}
is monotone only in a very trivial way. Namely, minimizing sequences $w_i$ escape to infinity and the value of $\lambda$ is identically zero along the flow. We overcome this difficulty by considering instead the following variant of Perelman's $\lambda$-functional,
\begin{equation}
\lambda_{AF}(g):=\inf_{w:w\to 1} \int_M\left(4\abs{\D w}^2+Rw^2\right)dV,
\end{equation}
where the infimum is now taken over all $w\in C^\infty(M)$ such that $w=1+O(r^{-1})$ at infinity. Unless the scalar curvature vanishes identically, the value of $\lambda_{AF}$ is strictly positive in our setting of asymptotically flat manifolds with nonnegative scalar curvature.\\

We have introduced the energy-functional $\lambda_{AF}$ in our recent work \cite{H}, where we also observed that $\lambda_{AF}$ gives a lower bound for the mass, i.e. we have the inequality 
\begin{equation}\label{lowerbdd}
m(g)\geq \lambda_{AF}(g).
\end{equation}
Furthermore, the (renormalized) Perelman-energy also plays an important role in questions concerning the stability of Ricci-flat spaces, see e.g. \cite{CHI,Ses,SSS,Hstab,H,HHS} for more information on this aspect. Finally, the Perelman-energy and its variant $\bar{\lambda}=\sup\lambda V^{2/3}$ numerically characterize the long-time behavior of the Ricci flow with surgery on closed three-manifolds \cite{P2}.\\

Having completed this short overview and motivation, we will now prove that $\lambda_{AF}$ satisfies a Perelman-type monotonicity formula.

\begin{theorem}\label{thmmon}
Away from the conformal rescaling and surgery times, we have the monotonicity formula
\begin{equation}\label{monform}
\tfrac{d}{dt}\lambda_{AF}(g(t))=2\int_M\abs{\Ric+\D^2f}^2e^{-f}dV\geq 0,
\end{equation}
where $f$ is the unique solution of
\begin{equation}\label{mineq}
\left(-4\Lap+R\right)e^{-f/2}=0,\qquad\quad f\to 0\quad \textrm{at}\quad \infty.
\end{equation}
\end{theorem}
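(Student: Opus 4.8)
The plan is to adapt Perelman's monotonicity of the $\lambda$-energy \cite{P1} to the noncompact asymptotically flat setting, the algebraic identity being essentially unchanged and the real work lying in the analysis on the end. First I would reformulate the functional. The substitution $w=e^{-f/2}$ turns $\lambda_{AF}(g)$ into $\inf_{f\to 0}\mathcal{F}(g,f)$, where $\mathcal{F}(g,f):=\int_M(R+\abs{\D f}^2)e^{-f}dV$ and the infimum runs over $f\in C^\infty(M)$ with $f=O(r^{-1})$ at infinity. I claim this infimum is attained by a unique smooth $f$: existence follows by the direct method, using that $\int_M 4\abs{\D w}^2 dV$ controls $\int_M r^{-2}(w-1)^2 dV$ on the asymptotically flat end via a Hardy inequality, which together with $R\geq 0$ gives coercivity on the affine space $\{w\to 1\}$; uniqueness follows from strict convexity of $w\mapsto\int_M(4\abs{\D w}^2+Rw^2)dV$; and smoothness together with the asymptotics $f=O(r^{-1})$, $\D f=O(r^{-2})$, $\D^2 f=O(r^{-3})$ follows from elliptic regularity and the invertibility of $-4\Lap+R$ on the weighted function spaces already used in the proof of Theorem \ref{thmLTE}. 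By construction this $f$ solves (\ref{mineq}), equivalently $2\Lap f-\abs{\D f}^2+R=0$ --- note the right-hand side is $0$, not an eigenvalue, because the minimization is over an affine space --- and $\lambda_{AF}(g)=\mathcal{F}(g,f)$.

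Next, fix a time $t_0$ away from the surgery and rescaling times, so that on a short interval $(t_0-\tau_0,t_0]$ the mass-decreasing flow is an ordinary smooth Ricci flow with bounded curvature on the fixed asymptotically flat manifold. Let $f_0$ be the minimizer at $t_0$ and solve the conjugate-heat-type equation $\partial_t f=-\Lap f+\abs{\D f}^2-R$ backward on $(t_0-\tau_0,t_0]$ with $f(t_0)=f_0$; equivalently $e^{-f}$ solves $\partial_t(e^{-f})=-\Lap(e^{-f})+Re^{-f}$, which is a forward-parabolic problem in $t_0-t$. Standard parabolic theory on the asymptotically flat manifold --- near infinity this is a fast-decaying perturbation of the flat heat equation --- provides a smooth solution on a possibly shorter interval, preserving $f(t)=O(r^{-1})$ with the corresponding decay of $\D f$ and $\D^2 f$. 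Differentiating $\mathcal{F}(g(t),f(t))$ using $\partial_t g=-2\Ric$, $\partial_t dV=-R\,dV$, the equation for $f$, and integration by parts, one obtains Perelman's identity $\tfrac{d}{dt}\mathcal{F}(g(t),f(t))=2\int_M\abs{\Ric+\D^2f}^2e^{-f}dV$; every boundary integral over $S_r$ produced by the integrations by parts tends to $0$ as $r\to\infty$ by the decay rates above. Equivalently, one could differentiate $\mathcal{F}(g(t),f_{\min,t})$ directly in $t$, where $f_{\min,t}$ is the minimizer at time $t$: the contribution of $\partial_t f_{\min,t}$ drops out because $2\Lap f-\abs{\D f}^2+R=0$, and the $g$-variation in the direction $-2\Ric$, combined with Perelman's divergence identity $\div((\Ric+\D^2 f)e^{-f})=0$ --- valid whenever $R+2\Lap f-\abs{\D f}^2$ is constant --- yields the same right-hand side.

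To conclude, note that for every $t\leq t_0$ the function $f(t)$ is an admissible competitor for $\lambda_{AF}(g(t))$, so $\lambda_{AF}(g(t))\leq\mathcal{F}(g(t),f(t))$, with equality at $t_0$. Since the right-hand side is $C^1$ in $t$, this gives $\liminf_{t\uparrow t_0}\tfrac{\lambda_{AF}(g(t_0))-\lambda_{AF}(g(t))}{t_0-t}\geq 2\int_M\abs{\Ric+\D^2 f_0}^2e^{-f_0}dV\geq 0$; applying the same comparison with $t_0$ replaced by a slightly later regular time and solving backward to $t_0$ shows that $t\mapsto\lambda_{AF}(g(t))$ is nondecreasing on each smooth interval. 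To upgrade the one-sided barrier to the exact identity at every $t_0$, I would prove that $t\mapsto\lambda_{AF}(g(t))$ is locally Lipschitz --- again via the weighted-space estimates, using the minimizer at one time as a competitor at nearby times together with the smooth $t$-dependence of $g$, $R$ and $dV$ --- or, equivalently, that the minimizer $f_{\min,t}$ depends smoothly on $t$, by the implicit function theorem applied to (\ref{mineq}) in weighted spaces where $-4\Lap+R$ is invertible; either way the matching one-sided bounds force $\tfrac{d}{dt}\lambda_{AF}(g(t))=2\int_M\abs{\Ric+\D^2 f}^2e^{-f}dV$.

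The main obstacle I expect is analytic rather than algebraic. The pointwise identity is Perelman's verbatim; what requires care in the asymptotically flat setting is the functional-analytic setup on the end --- the existence, uniqueness, regularity and decay of both the elliptic minimizer and the backward conjugate-heat solution in the correct weighted spaces, the verification that no boundary term at spatial infinity survives in any of the integrations by parts, and finally securing enough $t$-regularity of $\lambda_{AF}$ (or of the minimizer $f_{\min,t}$) to pass from the barrier inequality to the pointwise derivative.
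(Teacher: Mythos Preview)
Your proposal is correct, but the route differs from the paper's in an instructive way. Your primary argument follows Perelman's original strategy: solve the conjugate-heat equation $\partial_t f=-\Lap f+\abs{\D f}^2-R$ backward from the minimizer at $t_0$, invoke the identity $\tfrac{d}{dt}\mathcal{F}(g(t),f(t))=2\int_M\abs{\Ric+\D^2 f}^2e^{-f}dV$, and then pass from the barrier inequality to the derivative via regularity of $t\mapsto\lambda_{AF}$. The paper instead differentiates $\lambda_{AF}(g(t))=\mathcal{F}(g(t),f_{\min,t})$ directly---using that the $\partial_t f_{\min,t}$ contribution vanishes by the Euler--Lagrange equation---and then establishes the monotonicity formula by an explicit algebraic computation assembling a Bochner-type identity, the contracted Bianchi identity, and the minimizer equation $2\Lap f-\abs{\D f}^2+R=0$. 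You mention this alternative yourself, though you package the algebra as the single divergence identity $\div\big((\Ric+\D^2 f)e^{-f}\big)=\tfrac{1}{2}e^{-f}\D(R+2\Lap f-\abs{\D f}^2)=0$ rather than writing out the five pieces. The practical difference is that the paper's direct computation sidesteps exactly the issue you flag as the main obstacle: it never needs to solve a backward parabolic equation on the noncompact manifold or establish decay for its solution, requiring only the elliptic theory for the minimizer and the justification of the integrations by parts. Your approach is more in line with the standard presentation of Perelman's monotonicity, at the cost of that additional analytic input.
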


\begin{rem}
To avoid problems with modifying the Ricci flow by a family of diffeomorphisms in the noncompact setting, we will prove our monotonicity formula by a direct computation. This computation is quite different than Perelman's original computation in the compact setting (note however, that the Bianchi identity used below is of course just another manifestation of diffeomorphism invariance).
\end{rem}

\begin{proof}
Substituting $w=e^{-f/2}$, the definition of $\lambda_{AF}$ can be rewritten as
\begin{equation}
\lambda_{AF}(g)=\inf_{f:f\to 0} \int_M\left(R_g+\abs{\D f}_g^2\right)e^{-f}dV_g,
\end{equation}
where the infimum is taken over all $f\in C^\infty(M)$ such that $f=O(r^{-1})$ at infinity. By a similar argument as in the proof of Theorem \ref{thmLTE}, there exists a unique minimizer. The time derivative of $\lambda_{AF}$ along the Ricci flow equals
\begin{equation}
\tfrac{d}{dt}\lambda_{AF}(g(t))=\int_M\left[\Lap R+2\abs{\Ric}^2+2\Ric(\D f,\D f)-(R+\abs{\D f}^2)R\right]e^{-f}dV,
\end{equation}
where $f$ is the minimizer (at the time in consideration). Here, the first two terms come from the evolution of the scalar curvature, the third term comes from the evolution of the inverse metric, and the last term comes from the evolution of the volume element. The monotonicity formula will now follow from a computation using partial integrations, the Bianchi identity, and the equation (\ref{mineq}) for the minimizer, which can be rewritten as
\begin{equation}\label{minimeq}
2\Lap f-\abs{\D f}^2+R=0.
\end{equation}
The partial integrations can be justified using the decay estimates $g_{ij}-\delta_{ij}=O(r^{-1})$, $f=O(r^{-1})$, and the computation consists of the following pieces. First, we have the Bochner-type identity
\begin{multline}
\int_M\abs{\D^2 f}^2e^{-f}dV=\\
\int_M\left[-\langle \D f,\D\Lap f\rangle-\Ric(\D f,\D f)+\D^2f(\D f,\D f)\right]e^{-f}dV.
\end{multline}
Second, using the Bianchi identity we obtain
\begin{equation}
\int_M\langle\Ric,\D^2f\rangle e^{-f}dV=\int_M\left[\Ric(\D f,\D f)-\tfrac{1}{2}\langle \D R,\D f\rangle\right]e^{-f}dV.
\end{equation}
Third, using (\ref{minimeq}) we get the pointwise identity
\begin{equation}
\D^2f(\D f,\D f)-\langle\D f,\D\Lap f\rangle = \tfrac{1}{2}\langle\D f,\D R\rangle.
\end{equation}
Fourth, we have the partial integration formula
\begin{equation}
\int_M\Lap Re^{-f}dV=\int_M\langle\D R,\D f\rangle e^{-f}dV=\int_M (\abs{\D f}^2-\Lap f)Re^{-f}dV.
\end{equation}
Fifth and finally, equation (\ref{minimeq}) can be rewritten as
\begin{equation}
2(\abs{\D f}^2-\Lap f)=R+\abs{\D f}^2.
\end{equation}
Putting everything together (starting from the right hand side of (\ref{monform}) for convenience), the monotonicity formula follows.
\end{proof}

\begin{rem}\label{surgthm}
Choosing the surgery parameter $\delta$ small enough, $\lambda_{AF}$ can be made almost monotone at the surgery times (compare with \cite{P2,KL}).
\end{rem}

\begin{rem}
Immediately after the conformal rescaling we have $f=0$ for the minimizer, and thus
\begin{equation}
\tfrac{d}{dt}|_{t_k+}\lambda_{AF}(g(t))=2\int_M\abs{\Ric}^2dV.
\end{equation}
 Moreover, looking at the expression for a Schwarzschild-end suggests that
\begin{equation}
\int_M\abs{\Ric}^2dV\sim m^2,
\end{equation}
and thus $\lambda_{AF}\sim \eps m^2$ after time $\eps$, i.e. we expect that $\lambda_{AF}$ is proportional to the square of the mass at the time $t_{k+1}=(k+1)\eps$.
\end{rem}

Under an a priori assumption, an inequality between $\lambda_{AF}$ and the mass that is motivated by the above remark and complements the inequality (\ref{lowerbdd}), we can prove that the flow indeed squeezes out all the initial mass.

\begin{theorem}\label{thmapriori}
Let $(M(t),g(t))_{t\in[0,\infty)}$ be a solution of the mass-decreasing flow and assume a priori there exist a constant $c>0$, such that
$\lambda_{AF}(g(t_k))\geq c m(g(t_k))^2$ for all positive integers $k$. Then there exists a constant $C<\infty$ such that $m(g(t))\leq C/t$. In particular, the mass-decreasing flow squeezes out all the initial mass, i.e. $\lim_{t\to\infty}m(g(t))=0$.
\end{theorem}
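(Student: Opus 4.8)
The plan is to exploit the step-function structure of the mass. By Theorem \ref{thmLTE} the mass equals some value $m_k$ on the whole interval $t\in((k-1)\eps,k\eps)$; the presurgery/prerescaling mass at $t_k=k\eps$ is still $m_k$ (surgeries occur in compact high-curvature regions and do not affect the mass), and at $t_k$ it drops to $m_{k+1}=m_k+\delta m_k=m_k-\abs{\delta m_k}$, while $m_k\geq 0$ for every $k$. So everything reduces to controlling the sequence $(m_k)$, and I would aim to establish the discrete differential inequality
\begin{equation}
m_{k+1}\leq m_k-c\,m_k^2,
\end{equation}
after which one concludes by the elementary fact that a nonnegative sequence satisfying this decays like $1/k$.

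The one substantial point is the lower bound $\abs{\delta m_k}\geq\lambda_{AF}(g(t_k))$. I would prove it by observing that the conformal factor $w_k$ of (\ref{confeq}) --- the solution of $(-8\Lap+R)w_k=0$ with $w_k\to 1$ --- is precisely the minimizer of the quadratic energy $Q_k(w):=\int_M(8\abs{\D w}^2+R_{g(t_k)}w^2)\,dV$ over the affine set $\{w:w\to 1\}$, since $(-8\Lap+R)w=0$ is its Euler--Lagrange equation and the minimizer is unique because $(-8\Lap+R)$ is a positive operator on the weighted spaces used in the proof of Theorem \ref{thmLTE}. Then, using (\ref{deltam}) and discarding the nonnegative term $4\abs{\D w_k}^2$,
\begin{equation}
\abs{\delta m_k}=Q_k(w_k)=\min_{w\to 1}Q_k(w)\geq\int_M\left(4\abs{\D w_k}^2+R_{g(t_k)}w_k^2\right)dV\geq\lambda_{AF}(g(t_k)),
\end{equation}
the last step because $w_k=1+O(r^{-1})$ (established in the proof of Theorem \ref{thmLTE}) is an admissible competitor in the infimum defining $\lambda_{AF}$. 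Combined with the a priori hypothesis $\lambda_{AF}(g(t_k))\geq c\,m(g(t_k))^2=c\,m_k^2$, this gives $\abs{\delta m_k}\geq c\,m_k^2$, i.e. the claimed inequality $m_{k+1}\leq m_k-c\,m_k^2$.

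To finish, I would argue as follows. Since $m_{k+1}\geq 0$, the inequality forces $m_k-c\,m_k^2\geq 0$, hence $c\,m_k\leq 1$ for all $k$; if $m_{k_0}=0$ for some $k_0$ we are already done, so assume $0<c\,m_k\leq 1$ for all $k$. Then
\begin{equation}
\frac{1}{m_{k+1}}\geq\frac{1}{m_k-c\,m_k^2}=\frac{1}{m_k}\cdot\frac{1}{1-c\,m_k}\geq\frac{1}{m_k}+c,
\end{equation}
so telescoping gives $1/m_k\geq c(k-1)$ and $m_k\leq 1/(c(k-1))$ for $k\geq 2$. Passing back to continuous time, for $t\in((k-1)\eps,k\eps)$ one has $k-1>(t-\eps)/\eps$, whence $m(g(t))=m_k\leq C/t$ for a suitable constant $C$, say $C=2\eps/c$ (for $t\leq 2\eps$ one just uses $m_k\leq 1/c$ directly); in particular $m(g(t))\to 0$ as $t\to\infty$.

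I expect the whole difficulty to sit in the identity-level step $\abs{\delta m_k}\geq\lambda_{AF}(g(t_k))$ --- that is, in correctly matching the $8$-weighted energy $Q_k$ minimized by the conformal factor with the $4$-weighted energy defining $\lambda_{AF}$, and in checking that the decay $w_k-1=O(r^{-1})$ makes $w_k$ an admissible test function. Everything downstream is the routine solution of the recursion $m_{k+1}\leq m_k-c\,m_k^2$. It is worth noting that, under the a priori hypothesis, this argument never invokes the monotonicity formula of Theorem \ref{thmmon}; that formula is rather the tool one would need in order to attack the unconditional conjecture $\lim_{t\to\infty}m(g(t))=0$.
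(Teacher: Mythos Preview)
Your proof is correct and follows essentially the same route as the paper's: drop from the $8$-weighted energy in (\ref{deltam}) to the $4$-weighted one (``the inequality $4\leq 8$''), note that $w_k$ is an admissible competitor for $\lambda_{AF}$, and apply the a priori hypothesis to obtain $m_{k+1}-m_k\leq -c\,m_k^2$; you then spell out the standard $1/k$ decay for such a recursion, which the paper leaves implicit. One minor remark: the fact that $w_k$ actually \emph{minimizes} $Q_k$ is true but plays no role in your inequality chain --- the step $Q_k(w_k)\geq\int(4\abs{\D w_k}^2+Rw_k^2)\,dV$ holds for any $w_k$, and the subsequent step only needs $w_k$ to be admissible, not optimal.
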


\begin{proof}
Using Theorem \ref{thmLTE}, the definition of $\lambda_{AF}$, the inequality $4\leq 8$, and the a priori assumption, a computation gives
\begin{align}
m(g(t_{k+1}))-m(g(t_k))&=-\int_{M}(8\abs{\D w_k}^2+Rw_k^2)dV\nonumber\\
&\leq - \lambda_{AF}(g(t_k))\leq - c m(g(t_k))^2.
\end{align}
This implies that there exists a constant $C<\infty$ such that $m(g(t))\leq C/t$. Using this and the positive mass theorem, we conclude that the flow indeed squeezes out all the initial mass.
\end{proof}

\section{The continuum limit}\label{contlimit}
We now derive the limiting equations that formally arise when the iteration parameter $\eps$ is sent to zero. In the following the symbol $\cong$ denotes equality modulo terms of order $\eps^2$ and higher (assuming curvature bounds a priori, the error terms could be estimated explicitly).\\

From the evolution equation $\partial_tR=\Lap R+2\abs{\Ric}^2$ and $R_{g_{k-1}}=0$ we get
\begin{equation}
R_{g(t_k)}\cong 2\eps\abs{\Ric_{g(t_{k-1})}}^2.
\end{equation}
Then, solving $(-8\Lap_{g(t_k)}+R_{g(t_k)})w_k=0$ with $w_k\to 1$ at infinity gives
\begin{equation}
w_k\cong 1+\frac{\eps}{4}\Lap^{-1}_{g(t_k)}\abs{\Ric_{g(t_{k-1})}}^2.
\end{equation}
To first order the metric $g_k=w_k^4g(t_k)$ equals
\begin{equation}
g_k\cong\left(1+\eps\Lap^{-1}_{g(t_k)}\abs{\Ric_{g(t_{k-1})}}^2\right)g(t_k),
\end{equation}
and using also $g(t_k)\cong g_{k-1}-2\eps\Ric_{g_{k-1}}$ this becomes
\begin{equation}
g_k\cong g_{k-1}-2\eps\Ric_{g_{k-1}}+\eps\Lap_{g_{k-1}}^{-1}\abs{\Ric_{g_{k-1}}}^2g_{k-1},
\end{equation}
where we also approximated the inverse Laplacian and the Ricci curvature dropping terms of higher order. Thus, the limiting evolution equation is
\begin{equation}
\partial_t g=-2\Ric+\Lap^{-1}\abs{\Ric}^2 g,
\end{equation}
which is the Ricci flow modified by a nonlocal conformal factor which has the effect of projecting to the space of scalar flat metrics. Short time existence for this nonlocal flow was proved in \cite[Thm 1.3]{LQZ}.\\

Furthermore, note that the quantity $m-\lambda_{AF}$ is monotone at all times (i.e. at the conformal rescaling times and also in between). In the formal limit $\eps\to 0$ the quantity $\lambda_{AF}$ vanishes identically and our monotonicity formulas boil down to the formula
\begin{equation}\label{monformu}
\partial_t m=-2\int_M\abs{\Ric}^2 dV.
\end{equation}
This monotonicity formula also appear in \cite[Thm 1.4]{LQZ}.

\section{Problems and Questions}\label{openprob}
We conclude this article with a list of open problems and questions. Some of them came up in discussions with Lars Andersson, Hugh Bray, Gerhard Huisken, and Tom Ilmanen.
\begin{itemize}
\item Can one get rid of the a priori assumption relating the mass and the Perelman-energy?
\item Can the mass-decreasing flow be used to give an independent proof of the positive mass theorem?
\item Is there some clever argument in higher dimensions?
\item Is there some relationship with the Penrose inequality?
\end{itemize}
Let us comment on the first to questions. Assuming the positive mass theorem instead of proving it for the moment, motivated by (\ref{monformu}) we expect a space-time integral bound
\begin{equation}\label{intbound}
\int_0^\infty\int_M\abs{\Ric}^2dVdt\leq C
\end{equation}
for the mass-decreasing flow, at least when $\eps$ is chosen small enough. Thus the limit for $t\to\infty$ is flat in some integral sense. To answer the first question, one has to improve this into a convergence sufficiently strong to conclude that the mass limits to zero. To answer the second question, one could try to couple this argument with the derivation of (\ref{intbound}).


\begin{thebibliography}{999999}
\bibitem{ADM} R. Arnowitt, S. Deser, and C. Misner, \textit{Coordinate invariance and energy expressions in general relativity}, Phys. Rev. 122, 997--1006 (1961).
\bibitem{Ba} R. Bartnik, \textit{The mass of an asymptotically flat manifold}, Comm. Pure Appl. Math. 39, 661--693 (1986).
\bibitem{Br} H. Bray, \textit{Proof of the {R}iemannian {P}enrose inequality using the positive mass theorem}, J. Differential Geom. 59, no. 2, 177--267 (2001). 
\bibitem{BL} H. Bray, D. Lee, \textit{On the {R}iemannian {P}enrose inequality in dimensions less than eight}, Duke Math. J. 148, no. 1, 81--106 (2009). 
\bibitem{BBM} L. Bessi\`eres, G. Besson, S. Maillot, \textit{Ricci flow on open 3-manifolds and positive scalar curvature}, Geom. Topol. 15, 927--975 (2011) .
\bibitem{BBBMP} L. Bessi\`eres, G. Besson, S. Maillot, M. Boileau, J. Porti, \textit{Geometrisation of 3-manifolds}, 
EMS Tracts in Mathematics 13, Z\"urich (2010).
\bibitem{CHI} H.-D. Cao, R. Hamilton, T. Ilmanen, \textit{Gaussian densities and stability for some Ricci solitons}, arXiv:math/0404165v1 (2004).
\bibitem{CM1} T. Colding, W. Minicozzi, \textit{Estimates for the extinction time for the {R}icci flow on certain 3-manifolds and a question of {P}erelman}, J. Amer. Math. Soc. 18, no. 3, 561--569 (2005).
\bibitem{CM2} T. Colding, W. Minicozzi, \textit{Width and finite extinction time of {R}icci flow}, Geom. Topol. 12, no. 5, 2537--2586 (2008). 
\bibitem{CZ} H.-D. Cao, X.-P. Zhu, \textit{A complete proof of the {P}oincare and geometrization conjectures -- application of the {H}amilton-{P}erelman theory of the {R}icci flow}, Asian J. Math. 10, no. 2, 165--492 (2006). 
\bibitem{DM} X. Dai, L. Ma, \textit{Mass under the {R}icci flow}, Comm. Math. Phys. 274, no. 1, 65--80 (2007).
\bibitem{GHMS} M. Gutperle, M. Headrick, S. Minwalla, V. Schomerus, \textit{Spacetime energy decreases under world-sheet {RG} flow}, J. High Energy Phys. 073, no. 1 (2003).
\bibitem{GL} M. Gromov, H. B. Lawson, \textit{Spin and scalar curvature in the presence of a fundamental group. I.}, Ann. of Math. (2) 111, no. 2, 209--230 (1980).
\bibitem{HHS} S. Hall, R. Haslhofer, M. Siepmann, \textit{The stability inequality for Ricci-flat cones}, in preparation (2011).
\bibitem{Ham} R. Hamilton, \textit{Three-manifolds with positive Ricci curvature}, J. Differential Geom. 17, no. 2, 255--306 (1982). 
\bibitem{H} R. Haslhofer, \textit{A renormalized {P}erelman-functional and a lower bound for the {ADM}-mass}, J. Geom. Phys. 61, no. 11, 2162--2167 (2011).
\bibitem{Hstab} R. Haslhofer, \textit{Perelman's lambda-functional and the stability of Ricci-flat metrics}, to appear in Calc. Var. PDE, arXiv:1003.4633v3 (2011).
\bibitem{HI} G. Huisken, T. Ilmanen, \textit{Inverse {M}ean {C}urvature {F}low and the {R}iemannian {P}enrose {I}nequality}, J. Differential Geom. 59, no. 3, 353--437 (2001).
\bibitem{KM} J. Kahn, V. Markovic, \textit{Immersing almost geodesic surfaces in a closed hyperbolic three manifold}, to appear in Ann. of Math. (2), arXiv:0910.5501v5 (2011).
\bibitem{KL} B. Kleiner, J. Lott, \textit{Notes on {P}erelman's papers}, Geom. Topol. 12, no. 5, 2587--2855 (2008). 
\bibitem{LP} J. M. Lee, T. H. Parker, \textit{The {Y}amabe problem}, Bull. Amer. Math. Soc. 17, no. 1, 37--91 (1987).
\bibitem{LQZ} P. Lu, J. Qing, Y. Zheng, \textit{A Note on Conformal Ricci Flow}, arXiv:1109.5377v1 (2011).
\bibitem{MSY} W. Meeks, L. Simon, S. T. Yau, \textit{Embedded minimal surfaces, exotic spheres, and manifolds with positive {R}icci curvature}, Ann. of Math. (2) 116, no. 3, 621--659 (1982). 
\bibitem{MT} J. Morgan, G. Tian, \textit{Ricci flow and the {P}oincare conjecture}, Clay Mathematics Monographs 3, AMS, Providence, RI; Clay Mathematics Institute, Cambridge MA (2007).
\bibitem{OW} T.A. Oliynyk, E. Woolgar, \textit{Rotationally symmetric {R}icci flow on asymptotically flat manifolds}, Comm. Anal. Geom. 15, no. 3, 535--568 (2007).
\bibitem{P1} G. Perelman, \textit{The entropy formula for the Ricci flow and its geometric applications}, arXiv:math/0211159v1 (2002).
\bibitem{P2} G. Perelman, \textit{Ricci flow with surgery on three-manifolds}, arXiv:math/0303109v1 (2003).
\bibitem{P3} G. Perelman, \textit{Finite extinction time for the solutions to the {R}icci flow on certain three-manifolds}, arXiv:math/0307245v1 (2003).
\bibitem{SSS} O. Schn\"urer, F. Schulze, M. Simon, \textit{Stability of Euclidean space under Ricci flow}, Comm. Anal. Geom. 16, no. 1, 127--158 (2008). 
\bibitem{SY} R. Schoen, S. T. Yau, \textit{On the proof of the positive mass conjecture in general relativity}, Comm. Math. Phys. 65, no. 1, 45--76 (1979).
\bibitem{SY2} R. Schoen, S. T. Yau, \textit{On the structure of manifolds with positive scalar curvature}, Manuscripta Math. 28, no. 1-3, 159--183 (1979).
\bibitem{Ses} N. Sesum, \textit{Linear and dynamical stability of Ricci-flat metrics}, Duke Math. J. 133, no. 1, 1--26 (2006). 
\bibitem{W} E. Witten, \textit{A new proof of the positive energy theorem}, Comm. Math. Phys. 80, no. 3, 381--402 (1981).
\end{thebibliography}
\end{document}